\date{\today}
\newcommand{\Z}{{\mathbb Z}}
\newcommand{\R}{{\mathbb R}}
\newcommand{\Leb}{{\mathrm{Leb}}}
\newcommand{\LP}{{\mathrm{LP}}}
\newtheorem{theorem}{Theorem}[section]
\newtheorem{lemma}[theorem]{Lemma}
\newtheorem{prop}[theorem]{Proposition}
\newtheorem{coro}[theorem]{Corollary}
\theoremstyle{definition}
\newtheorem{remark}[theorem]{Remark}
\theoremstyle{definition}
\theoremstyle{definition}
\theoremstyle{definition}
\theoremstyle{definition}
\numberwithin{equation}{section}
\newcommand{\set}[1]{\left\{#1\right\}}
\begin{document}

\title[Multidimensional Operators with Cantor Spectrum]{Multidimensional Almost-Periodic Schr\"odinger Operators with Cantor Spectrum}

\begin{abstract}
We construct multidimensional almost-periodic Schr\"odinger operators whose spectrum has zero lower box counting dimension. In particular, the spectrum in these cases is a generalized Cantor set of zero Lebesgue measure.
\end{abstract}

\author[D.\ Damanik]{David Damanik}
\address{Department of Mathematics, Rice University, Houston, TX~77005, USA}
\email{damanik@rice.edu}
\thanks{D.D.\ was supported in part by NSF grant DMS--1700131.}

\author[J.\ Fillman]{Jake Fillman}
\address{Department of Mathematics, Virginia Tech, 225 Stanger Street, Blacksburg, VA~24061, USA}
\email{fillman@vt.edu}
\thanks{J.F.\ was supported in part by an AMS-Simons travel grant, 2016--2018}

\author[A.\ Gorodetski]{Anton Gorodetski}
\address{Department of Mathematics, University of California, Irvine, CA~92697, USA}
\email{asgor@math.uci.edu}
\thanks{A.G.\ was supported in part by Simons Fellowship (grant number 556910).}

\maketitle



\textbf{MSC2010 Subject Class:} 34L40

\section{Introduction}

A Schr\"odinger operator in $\R^d$ is a self-adjoint operator of the form
\[
L_V \phi
=
-\Delta \phi + V\phi
\]
in $L^2(\R^d)$, where $V:\R^d \to \R$ is a bounded, continuous function, known as the \emph{potential}. One says that $V$ is \emph{periodic} if there are linearly independent $u_1,\ldots,u_d \in \R^d$ with $V(x)= V(x+u_j)$ for all $x$ and $j$, (uniformly) \emph{almost-periodic} if
\[
\set{V(\cdot - y) : y \in \R^d}
\]
is precompact in $C(\R^d)$ with the uniform topology, and \emph{limit-periodic} (denoted $V \in \LP(\R^d)$) if there are periodic $V^{(1)},V^{(2)},\ldots$ in $C(\R^d)$ with $V^{(n)} \to V$ uniformly. It is not hard to check that every limit-periodic potential is also almost-periodic.

It is well known that in the one-dimensional case, $d = 1$, the spectrum of $L_V$ has a tendency to be a Cantor set when  $V \in \LP(\R)$; compare for example, \cite{AS81, DFL2017, FL, MC, M, PT84, PT88}. Moreover, these Cantor sets can be quite thin, in the sense that they may have zero Lebesgue measure; using ideas from Avila's work in \cite{avila2009CMP}, Damanik, Fillman, and Lukic constructed examples of $V \in \LP(\R)$ so that $\sigma(L_V)$ is a zero-measure Cantor set \cite{DFL2017}. More precisely, the zero-measure phenomenon is generic in $\LP(\R)$, whereas for a dense set of $V \in \LP(\R)$, $\sigma(L_V)$ is even of zero Hausdorff dimension \cite{DFL2017}.

When one of the authors of \cite{DFG2014} and \cite{DFL2017} presented the results from those papers at the ICMP 2018 in Montr\'eal, the following question was asked at the end of the talk: is this a purely one-dimensional phenomenon or is there any hope to construct examples of this kind in higher dimensions as well?

The natural inclination is to expect that this is indeed a one-dimensional phenomenon that has no counterpart in higher dimensions. The main reason is given by the work done on the Bethe-Sommerfeld conjecture. If $d = 1$ and, for example, $V(x) = \cos x$, then it is known that $\sigma(L_V)$ is a disjoint union of infinitely many compact intervals \cite[pp.~298--299]{RS78}. In other words, the spectrum has infinitely many gaps at arbitrarily high energies. This topological structure is known to be generic among all one-dimensional periodic potentials \cite{S76}. On the other hand, the \emph{Bethe--Sommerfeld conjecture} posits that $\sigma(L_V)$ contains a half line whenever $V$ is periodic and $d \geq 2$.  This conjecture inspired intense study, with contributions from many authors, including (but not limited to) \cite{HelMoh98,Karp97,PopSkr81,Skr79,Skr84,Skr85,Vel88}, and culminating in the paper of Parnovski \cite{Parn2008AHP}. For a more detailed discussion see also \cite{Kuchment2016BAMS} and references therein. In recent years, there has been renewed interest in the Bethe--Sommerfeld conjecture for other types of operators, including operators on quantum graphs \cite{ET2017} and discrete Schr\"odinger operators. Following a partial result in \cite{KrugerPreprint}, a discrete version of the Bethe--Sommerfeld conjecture was proved on $\Z^2$ \cite{EF18+}, on $\Z^d$ for general $d \geq 2$ \cite{HJ18}, and on other lattices \cite{FH18+}.

This shows that periodic spectra exhibit a marked difference in their topological structure as one passes from $d = 1$ to $d \ge 2$. It also suggests that for almost periodic potentials, and especially limit-periodic potentials, in dimensions at least two, one should not necessarily expect to find gaps in the spectrum at arbitrarily high energies. On the one hand, this is not a formal consequence of the known results in the periodic case as the bottom of the half line in the spectrum may run off to infinity as one moves through a sequence of periodic approximations to a limit-periodic/almost periodic $V$. On the other hand, there actually has been work on aperiodic almost-periodic potentials $V$ for which it could be shown that the spectrum contains a half line \cite{KL13, KS18+}. That is, a version of the Bethe--Sommerfeld conjecture has been established in the almost-periodic context, beyond the periodic case. Moreover, no example of a multi-dimensional almost periodic potential $V$ is known for which $\sigma(L_V)$ does not contain a half line.

This discussion prompts one to expect that the answer to the question above is that the results from \cite{DFL2017} likely do not have a multi-dimensional counterpart, and in particular there are likely no multi-dimensional almost periodic potentials $V$ for which $\sigma(L_V)$ has zero Lebesgue measure, let alone zero Hausdorff dimension.
\medskip

Alas, the correct answer to the question is that the results from \cite{DFL2017} do have higher-dimensional counterparts. We show in this short note how to use the examples of \cite{DFL2017} to construct $V \in \LP(\R^d)$ for $d \geq 2$ so that $\sigma(L_V)$ has zero lower box counting dimension. As a consequence, for these $V$, $\sigma(L_V)$ is a set of zero Hausdorff dimension, and in particular it is a generalized Cantor set (closed, with empty interior, and without any isolated points) of zero Lebesgue measure. Thus, when one transitions from the periodic to the almost-periodic setting, the property set forth in the Bethe--Sommerfeld conjecture may fail in the most spectacular fashion: not only does the spectrum have infinitely many gaps, but these gaps are dense and have full Lebesgue measure.

Let us emphasize that, to the best of our knowledge, these are the first examples of Schr\"odinger operators in $L^2(\R^d)$ with $d \ge 2$ (with \emph{any} potential, not just an almost periodic potential) that have a generalized Cantor set as spectrum. The additional statements about this Cantor set being very thin in the sense of standard fractal dimensions are an added bonus.

\medskip

Let us recall how one defines the box-counting dimensions (also called the Minkowski dimensions or the Minkowski-Bouligand dimensions) of a bounded\footnote{Boundedness is necessary to ensure that $S$ may be covered by finitely many $\varepsilon$-boxes.} set $S\subseteq \R$. Given $\varepsilon > 0$, let $N(\varepsilon) = N(\varepsilon;S)$ denote the minimal number of intervals of length $\varepsilon$ needed to cover $S$. The upper and lower box-counting dimensions of $S$ are then defined by
\[
\dim_{\rm B}^+(S)
=
\limsup_{\varepsilon\downarrow 0} \frac{\log N(\varepsilon;S)}{\log(\varepsilon^{-1})},
\quad
\dim_{\rm B}^-(S)
=
\liminf_{\varepsilon\downarrow 0} \frac{\log N(\varepsilon;S)}{\log(\varepsilon^{-1})}.
\]

We will say that (a potentially unbounded set) $S \subseteq \R$ has zero lower box counting dimension if
\[
\dim_{\rm B}^-(S \cap [-a,a])  = 0
\]
for all $a>0$.

It is well known that any $S \subseteq \R$ with zero lower box counting dimension must have zero Hausdorff dimension and hence zero Lebesgue measure.

\begin{theorem} \label{t:zeroMeasMultiD}
There are multi-dimensional limit-periodic $V$ such that $\sigma(L_V)$ is a generalized Cantor set of zero Lebesgue measure. In fact, there is a dense subset $\mathcal{B} \subseteq \LP(\R)$ with the property that $\sigma(L_V)$ is a generalized Cantor set of zero lower box counting dimension whenever $V$ is of the form
\begin{equation} \label{eq:sepPot}
V(x_1,\ldots,x_d)
=
\sum_{j=1}^d W(x_j)
\end{equation}
with $W \in \mathcal{B}$.\footnote{Clearly, if $W \in \LP(\R)$, then $V \in \LP(\R^d)$.}
\end{theorem}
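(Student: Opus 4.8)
The plan is to exploit the separable structure in \eqref{eq:sepPot} to reduce everything to the one-dimensional theory of \cite{DFL2017}. Fix the dense set $\mathcal{B} \subseteq \LP(\R)$ supplied by that work, so that for each $W \in \mathcal{B}$ the one-dimensional spectrum $\Sigma := \sigma(L_W)$ is a generalized Cantor set with $\dim_{\rm B}^-(\Sigma \cap [-a,a]) = 0$ for every $a > 0$. Writing $L^2(\R^d) = \bigotimes_{j=1}^d L^2(\R)$, the operator $L_V$ with $V(x) = \sum_{j=1}^d W(x_j)$ is the tensor sum $\sum_{j=1}^d H_j$, where $H_j$ acts as $L_W$ in the $j$-th variable and as the identity in the others. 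These $H_j$ are commuting self-adjoint operators, and the standard description of the spectrum of a tensor sum of commuting self-adjoint operators gives
\begin{equation*}
\sigma(L_V) = \overline{\Sigma + \cdots + \Sigma} = \overline{\Sigma^{+d}},
\end{equation*}
the closure of the $d$-fold sumset of $\Sigma$ with itself. Thus the theorem reduces to two statements about $\Sigma^{+d}$: that its local lower box-counting dimension vanishes, and that its closure is a generalized Cantor set.

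For the dimension count I would first localize. Since $\Sigma$ is bounded below, say by $E_0 = \inf \Sigma$, any representation $s_1 + \cdots + s_d \le a$ with $s_j \in \Sigma$ forces each $s_j \in [E_0, M]$ with $M = a - (d-1)E_0$; hence $\Sigma^{+d} \cap [-a,a]$ is covered by the $d$-fold sumset of the bounded set $\Sigma_0 := \Sigma \cap [E_0, M]$. Covering $\Sigma_0$ by $N(\varepsilon; \Sigma_0)$ intervals of length $\varepsilon$ and adding them in all possible $d$-tuples yields a cover of $\Sigma_0^{+d}$ by at most $N(\varepsilon;\Sigma_0)^d$ intervals of length $d\varepsilon$, so $N(d\varepsilon; \Sigma^{+d} \cap [-a,a]) \le N(\varepsilon; \Sigma_0)^d$. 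Here is the crucial point: because all $d$ summands are the \emph{same} set $\Sigma$, a single sequence $\varepsilon_n \downarrow 0$ realizing $\dim_{\rm B}^-(\Sigma_0) = 0$ can be used simultaneously in every factor, and along it the above inequality gives
\begin{equation*}
\frac{\log N(d\varepsilon_n; \Sigma^{+d}\cap[-a,a])}{\log(1/(d\varepsilon_n))} \le d\,\frac{\log N(\varepsilon_n;\Sigma_0)}{\log(1/\varepsilon_n) - \log d} \longrightarrow 0,
\end{equation*}
whence $\dim_{\rm B}^-(\Sigma^{+d} \cap [-a,a]) = 0$. Since passing to the closure and shrinking the window do not increase the lower box-counting dimension, the same holds for $\sigma(L_V) \cap [-a,a]$ for every $a > 0$.

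It remains to check the topological structure. The spectrum $\sigma(L_V)$ is closed because $L_V$ is self-adjoint. Zero local lower box-counting dimension forces zero Hausdorff dimension, hence zero Lebesgue measure, hence empty interior, so $\sigma(L_V)$ is nowhere dense. Finally $\sigma(L_V)$ has no isolated points: given $s_1 + \cdots + s_d \in \Sigma^{+d}$, the fact that $s_1$ is not isolated in $\Sigma$ produces $s_1^{(k)} \to s_1$ in $\Sigma$ with $s_1^{(k)} \ne s_1$, and then $s_1^{(k)} + s_2 + \cdots + s_d$ is a sequence of distinct points of $\Sigma^{+d}$ converging to $s_1 + \cdots + s_d$; points of $\overline{\Sigma^{+d}} \setminus \Sigma^{+d}$ are limit points by definition. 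Therefore $\sigma(L_V) = \overline{\Sigma^{+d}}$ is perfect and nowhere dense, i.e.\ a generalized Cantor set.

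The main obstacle is the subadditivity step in the dimension count: for sumsets of \emph{distinct} sets the lower box-counting dimension need not be controlled, since the subsequences witnessing small covering numbers for the individual sets may be misaligned. What rescues the argument here is precisely the diagonal structure $V(x) = \sum_j W(x_j)$, which makes all summands equal and lets a single good sequence $\varepsilon_n$ do all the work; this is also why the hypothesis is phrased in terms of a single $W \in \mathcal{B}$ rather than allowing a different limit-periodic potential in each coordinate.
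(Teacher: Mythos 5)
Your proposal is correct and follows essentially the same route as the paper: reduce to the one-dimensional result via the Minkowski-sum description of the spectrum of a separable potential, localize using the lower bound $\inf\Sigma$ (the paper's Corollary~\ref{cor:boxSumBdBelow}), and exploit the fact that all $d$ summands are the same set so that one good sequence $\varepsilon_n$ works in every factor (the paper's Proposition~\ref{prop:boxsum} and Remark~(1)). The only cosmetic difference is that the paper takes the zero-lower-box-dimension input from its own Theorem~\ref{t:zeroBox} rather than directly from \cite{DFL2017}, and relegates the perfectness of the sumset to a remark.
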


The key technical result we prove here, however, is a one-dimensional result:

\begin{theorem} \label{t:zeroBox}
There is a dense subset $\mathcal{B} \subseteq \LP(\R)$ such that, for every $V \in \mathcal{B}$, $\sigma(L_V)$ has zero lower box counting dimension.
\end{theorem}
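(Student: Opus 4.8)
The plan is to build $\mathcal{B}$ through a controlled limit-periodic approximation scheme and to read off the box-counting dimension directly from the band structure of the periodic approximants. First I would record two elementary facts. Since $\norm{L_V - L_W} = \norm{V-W}_\infty$ for bounded multiplication perturbations, the Hausdorff distance of the spectra satisfies $\dist_H(\sigma(L_V),\sigma(L_W)) \le \norm{V-W}_\infty$; in particular, if $\norm{V-V_n}_\infty \le \delta_n$, then $\sigma(L_V)\cap[-a,a]$ lies in the $\delta_n$-neighborhood of $\sigma(L_{V_n})$. Second, if $p$ is periodic of period $T$, Floquet--Bloch theory writes $\sigma(L_p)=\set{E:\abs{\Delta_p(E)}\le 2}$ as a union of compact bands, and the number of those bands meeting a fixed window $[-a,a]$ is at most $C(a)\,T$, where $C(a)$ depends only on $a$ and an a priori bound on $\norm{p}_\infty$ (which stays uniformly bounded along the construction, since all perturbations are small).

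Next I would set up the iterative construction producing $V \in \mathcal{B}$. Starting from an arbitrary periodic potential and following the gap-opening scheme of \cite{DFL2017} (itself built on \cite{avila2009CMP}), I would produce periodic $V_n$ of period $T_n$ with $T_n \mid T_{n+1}$, with $\norm{V_{n+1}-V_n}_\infty$ summable (so that $V_n \to V \in \LP(\R)$ and $\delta_n := \norm{V-V_n}_\infty \to 0$), and such that every band of $\sigma(L_{V_n})$ meeting $[-n,n]$ has length at most $\beta_n$. Denseness of $\mathcal{B}$ comes, as in \cite{DFL2017}, from the freedom to start the scheme from any periodic potential while keeping all perturbations arbitrarily small.

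The box-counting estimate then follows by choosing the good sequence of scales $\varepsilon_n := \beta_n + 2\delta_n$. By the two facts above, for fixed $a$ and all large $n$ the set $\sigma(L_V)\cap[-a,a]$ lies in the $\delta_n$-neighborhood of those bands of $V_n$ meeting $[-a-1,a+1]$; each such fattened band has length at most $\beta_n + 2\delta_n = \varepsilon_n$, and there are at most $C(a+1)\,T_n$ of them, so $N(\varepsilon_n;\sigma(L_V)\cap[-a,a]) \le C(a+1)\,T_n + 1$. Hence
\[
\dim_{\rm B}^-\!\big(\sigma(L_V)\cap[-a,a]\big)
\le \liminf_{n\to\infty}\frac{\log\!\big(C(a+1)T_n+1\big)}{\log(\varepsilon_n^{-1})}
= \liminf_{n\to\infty}\frac{\log T_n+O(1)}{\log(1/\beta_n)+O(1)} ,
\]
so the whole statement reduces to arranging $\log T_n = o\big(\log(1/\beta_n)\big)$, i.e.\ bandwidths that are super-polynomially small in the period.

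The hard part will be exactly this last point, and it is strictly stronger than what is needed for zero Lebesgue measure (which only requires $T_n\beta_n \to 0$, i.e.\ $\liminf \log(1/\beta_n)/\log T_n \ge 1$) or even for zero Hausdorff dimension. The obstacle is quantitative: passing from period $T_n$ to period $T_{n+1}=q_n T_n$ splits each band into $q_n$ pieces, and to drive the new bandwidth $\beta_{n+1}$ far below the naive $\beta_n/q_n$ one must open the interior gaps almost fully while keeping the perturbations summable. I would handle this by iterating the gap-opening step many times per recorded scale and letting $q_n$ grow fast enough that $\log(1/\beta_n)$ outpaces $\log T_n$; the delicate estimate is the control of the sub-band \emph{lengths} — as opposed to the gap \emph{widths}, which first-order perturbation theory controls directly — which must be extracted from the periodic machinery of \cite{DFL2017, avila2009CMP}. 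Once that quantitative band-shrinking is in place, the displayed $\liminf$ is zero for every $a>0$, and Theorem~\ref{t:zeroBox} follows.
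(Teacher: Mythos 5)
Your proposal follows essentially the same route as the paper's proof: approximate $V$ by periodic potentials $V_n$ with periods $T_n \mid T_{n+1}$ and rapidly decaying tails, cover $\sigma(L_V)\cap[-a,a]$ by slightly fattened bands of $\sigma(L_{V_n})$, count those bands linearly in $T_n$ via the band-counting estimate, and win because the covering scale is super-polynomially small in $T_n$. The one step you defer as the ``hard part'' --- arranging $\log T_n = o(\log(1/\beta_n))$ --- is exactly what the paper imports wholesale from \cite{DFL2017} as Lemma~\ref{l:smallspec}: the measure bound $\Leb\!\left(\sigma(L_{\widetilde V})\cap[-a,a]\right) \le \exp(-\widetilde T^{1/2})$ dominates every individual band length, so $\log(1/\beta_n) \ge T_n^{1/2} \gg \log T_n$ comes for free and no separate control of sub-band lengths versus gap widths is needed.
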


Indeed, Theorem~\ref{t:zeroBox} quickly implies Theorem~\ref{t:zeroMeasMultiD}:

\begin{proof}[Proof of Theorem~\ref{t:zeroMeasMultiD}]
For separable potentials $V$ as in \eqref{eq:sepPot}, one can express $\sigma(L_V)$ as a Minkowski sum of the 1D spectra:
\[
\sigma(L_V)
=
\sigma(L_{W}) + \cdots + \sigma(L_{W})
=
\set{\sum_{j=1}^d y_j : y_j \in \sigma(L_{W}) \text{ for each } j}.
\]
Thus, the conclusion of the theorem follows from Theorem~\ref{t:zeroBox} and standard arguments about Minkowski sums of fractal sets, e.g.\ Corollary~\ref{cor:boxSumBdBelow}.
\end{proof}

\begin{remark}
Let us make a few comments on Theorem~\ref{t:zeroMeasMultiD}.
\begin{enumerate}

\item The analysis of \cite{DFL2017} supplies the key input. Since Damanik--Fillman--Lukic were able to incorporate a coupling constant into their construction, one can also incorporate coupling constants into the present work. 
    That is to say, there is a dense family $\mathcal{B}$ so that $\sigma(L_V)$ is a Cantor set of zero Hausdorff dimension whenever
\[
V(x_1,\ldots,x_d)
=
 \sum_{j=1}^d \lambda_jW(x_j)
\]
with $W \in \mathcal{B}$ and $\lambda_1, \ldots, \lambda_d > 0$. Notice that this statement does not follow from the fact that $\dim_{\rm B}^-(\sigma(L_{\lambda W}))=0$ for all $\lambda>0$ automatically, since in general  sum of sets of zero lower box counting dimension does not have to have zero lower box counting dimension. Nevertheless, one can extract from \cite{DFL2017} that the set $\mathcal{B}$ can be constructed in such a way that for any $W\in  \mathcal{B}$ and any $\lambda_1, \ldots, \lambda_d > 0$ there exists a sequence $\varepsilon_n\to 0$ such that $\lim_{n\to \infty} \frac{\log N(\varepsilon_n;\sigma(L_{\lambda_j W}))}{\log(\varepsilon_n^{-1})}=0$ for each $j=1,\ldots, d$. And this is sufficient to show that the set
$$
\sigma(L_{\lambda_1 W}) + \cdots + \sigma(L_{\lambda_d W})
$$
has zero lower box counting dimension.

\item Our proof will address the dimensional statements, which in turn imply that the spectrum has empty interior. Since we claim that it is a generalized Cantor set, let us mention that the spectrum is always closed and the one-dimensional spectra (and hence their Minkowski sums) have no isolated points, both by well-known general principles.

\item It is not hard to construct $V$'s in Theorem~\ref{t:zeroBox} which are also Gordon potentials in the sense of \cite{G76}. Thus, it is possible to produce examples in Theorem~\ref{t:zeroMeasMultiD} and \ref{t:zeroBox} which have purely singular continuous spectrum. The absence of absolutely continuous spectrum is immediate from the zero Lebesgue measure property, and the absence of point spectrum follows from the Gordon lemma \cite{G76}. More specifically, the Gordon lemma yields the absence of eigenvalues, and hence the continuity of all spectral measures for the one-dimensional operators. From there, the spectral measure of any element of $L^2(\R^d)$ of the form $\phi(x) = \phi_1(x_1)\cdots\phi_d(x_d)$ (with $\phi_j \in L^2(\R)$) is the convolution of continuous measures, hence continuous. Since linear combinations of such functions are dense in $L^2(\R^d)$, it follows that $L_V$ has purely continuous spectrum.

\item By substituting the analysis of \cite{avila2009CMP} for \cite{DFL2017}, one can follow the outline of this paper and produce multidimensional limit-periodic discrete Schr\"odinger operators having spectra of zero lower box dimension (and hence zero Hausdorff dimension and zero Lebesgue measure). We focus on the continuum case here, but the changes to pass to the discrete setting are cosmetic.

\item It is interesting to compare Theorem~\ref{t:zeroMeasMultiD} to the results of \cite{KL13} in dimension two. From the construction in the proof of Theorem~\ref{t:zeroMeasMultiD}, one can choose $W$ to be of the form
\[
W(x) = \sum_{j=1}^\infty W_j(x),
\]
where $W_j(x)$ is $2^{j-1}$-periodic, and one may then write
\[
V(x_1,x_2) = \sum_{j=1}^\infty \underbrace{W_j(x_1) + W_j(x_2)}_{\equiv V_j(x_1,x_2)}.
\]
Clearly then, our examples must fail to satisfy the decay estimate
\[
\|V_j\|_\infty
\leq
\hat{C} e^{-2^{\eta j}},
\qquad
\eta > \eta_0 > 0
\]
from \cite{KL13}. However, it is unclear what the optimal decay rate in Theorem~\ref{t:zeroMeasMultiD} is. To be more specific, the optimal rate of decay of $\|V_j\|_\infty$ depends on the optimal quantitative dependence of $N_0$ on $\varepsilon$ in Lemma~\ref{l:smallspec}, and the proof of Lemma~\ref{l:smallspec} from \cite{DFL2017} does not yield useful quantitative bounds on $N_0$.
\end{enumerate}
\end{remark}
The remainder of the paper is concerned with proving Theorem~\ref{t:zeroBox}. For the reader's convenience, we also attach an appendix explaining how to derive the dimension result alluded to in the previous proof.

\section{Preparatory Work: The Spectrum in Finite Energy Windows} \label{sec:prep}

We will  need the following elementary estimate on the number of bands that one may observe in a finite energy window. This follows from standard asymptotics for the bands of periodic Schr\"odinger operators; for a detailed proof, see \cite{DFL2017}.

\begin{lemma} \label{l:bandcount}
If $V$ is continuous and $T$-periodic and $a>0$, then the interval $[-a,a]$ intersects no more than
\[
\frac{T}{\pi} \sqrt{a+\|V\|_\infty} + 1
\]
bands of $\sigma(L_V)$.
\end{lemma}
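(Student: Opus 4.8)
The plan is to combine Floquet--Bloch theory with a fiberwise min--max comparison against the free operator $L_0 = -\frac{d^2}{dx^2}$. Since $V$ is $T$-periodic, the Floquet transform decomposes $L_V$ as a direct integral over quasimomenta $k$ in the Brillouin zone $[-\pi/T,\pi/T]$, with fiber operators $L_V(k)$ acting on $L^2([0,T])$ under $k$-quasiperiodic boundary conditions. Each fiber has compact resolvent, so its spectrum is an ordered sequence of eigenvalues $E_1(k)\le E_2(k)\le\cdots$, the band functions, which depend continuously on $k$; the bands are the closed intervals $B_n=E_n([-\pi/T,\pi/T])$ and $\sigma(L_V)=\bigcup_{n\ge1}B_n$. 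I write $E_n^0(k)$ for the corresponding band functions of $L_0$.

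First I would reduce the counting of bands meeting $[-a,a]$ to a statement about band bottoms. Because $E_n$ is continuous on the connected Brillouin zone, $B_n$ is the closed interval $[\min_k E_n(k),\max_k E_n(k)]$, so $B_n$ can intersect $[-a,a]$ only if its left endpoint $\min_k E_n(k)\le a$. Hence it suffices to bound the number of indices $n$ with $\min_k E_n(k)\le a$. Second, I would invoke the min--max principle fiberwise: writing $L_V(k)=L_0(k)+V$ and using $V\ge -\|V\|_\infty$ as a multiplication operator, one gets $E_n(k)\ge E_n^0(k)-\|V\|_\infty$ for every $n$ and $k$. Taking the infimum over $k$ yields $\min_k E_n^0(k)\le \min_k E_n(k)+\|V\|_\infty$, so the index set $\{n:\min_k E_n(k)\le a\}$ is contained in $\{n:\min_k E_n^0(k)\le a+\|V\|_\infty\}$, and the counting is thereby transferred to the free operator.

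Third I would compute the free spectrum explicitly. The eigenvalues of $L_0(k)$ are $(k+2\pi m/T)^2$ for $m\in\Z$, and unfolding the parabola $\xi\mapsto\xi^2$ into the Brillouin zone shows that the $n$-th free band is $B_n^0=[((n-1)\pi/T)^2,(n\pi/T)^2]$, so $\min_k E_n^0(k)=((n-1)\pi/T)^2$. The number of integers $n\ge1$ satisfying $((n-1)\pi/T)^2\le a+\|V\|_\infty$ is exactly $\lfloor \frac{T}{\pi}\sqrt{a+\|V\|_\infty}\rfloor+1$, which is bounded by $\frac{T}{\pi}\sqrt{a+\|V\|_\infty}+1$. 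Chaining the three steps gives the claimed bound.

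The calculations are routine; the single point requiring care is the bookkeeping that matches the min--max index $n$ of $L_V$ to that of $L_0$ \emph{uniformly in $k$}, since it is this index-by-index comparison that legitimizes the set inclusion in the second step and lets the explicit free count control the perturbed one. This is also exactly the ingredient that replaces the appeal to ``standard band asymptotics'': rather than invoking Weyl-type counting asymptotics, the elementary comparison $E_n(k)\ge E_n^0(k)-\|V\|_\infty$ yields the clean, fully quantitative constant $T/\pi$ directly.
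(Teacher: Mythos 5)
Your proof is correct and is essentially the argument the paper has in mind: the paper defers to the ``standard asymptotics for the bands'' proved in \cite{DFL2017}, which rest on exactly the fiberwise min--max comparison $E_n(k) \ge E_n^0(k) - \|V\|_\infty$ with the free Floquet eigenvalues $((n-1)\pi/T)^2 \le E_n^0(k) \le (n\pi/T)^2$ that you carry out. The index bookkeeping and the final count $\lfloor \frac{T}{\pi}\sqrt{a+\|V\|_\infty}\rfloor + 1$ are all in order.
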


We will also use the following lemma from \cite{DFL2017}.

\begin{lemma} \label{l:smallspec}
Suppose $V \in C(\R)$ is $T$-periodic, $\varepsilon > 0$, and $a > 0$. There exists $N_0 = N_0(a,V,\varepsilon) \in \Z_+$ such that the following holds true. For any integer $N \geq N_0$, setting $\tilde T := NT$, there is an $\tilde T$-periodic potential $\widetilde V$ such that $\|V - \widetilde V\|_\infty < \varepsilon$, and one has the measure estimate
\[
\Leb\!\left(\sigma(L_{\widetilde V}) \cap [-a,a]\right)
\leq
\exp\left(-\widetilde T^{1/2}\right),
\]
where $\Leb$ denotes Lebesgue measure.
\end{lemma}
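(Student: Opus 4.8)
The plan is to pass to Floquet theory for the $\tilde T$-periodic operator $L_{\widetilde V}$ and convert the measure estimate into a statement about the widths of the individual spectral bands. Writing $\Delta_{\tilde T}(E) = \tr M_{\tilde T}(E)$ for the trace of the monodromy matrix over one period, one has $\sigma(L_{\widetilde V}) = \{E : |\Delta_{\tilde T}(E)| \le 2\}$, and on each band the discriminant moves monotonically from $+2$ to $-2$, while in each gap it leaves $[-2,2]$ and attains an extremum of modulus $2\cosh(\tilde T\,\ell(E_\ast))$ controlled by the Lyapunov exponent $\ell$ at the center $E_\ast$ of that gap. Hence the width of a band wedged between two gaps is comparable to $u\,e^{-\tilde T\ell}$, where $u \sim \sqrt{E}/\tilde T$ is the mean spacing of consecutive band edges near energy $E$. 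First I would combine this with Lemma~\ref{l:bandcount}: since $[-a,a]$ meets at most $\tfrac{\tilde T}{\pi}\sqrt{a+\|\widetilde V\|_\infty}+1 \lesssim \tilde T$ bands, it suffices to drive the hyperbolicity in every gap of $[-a,a]$ up to $\tilde T\,\ell \gtrsim \tilde T^{1/2}$, for then each band has width $\lesssim e^{-\tilde T^{1/2}}$ and the total measure is $\lesssim \tilde T\,e^{-\tilde T^{1/2}}$; aiming for a slightly larger exponent absorbs the factor $\tilde T$ and yields the stated bound, while the rate of this absorption fixes $N_0$.

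The construction of $\widetilde V$ itself I would carry out scale by scale, packing many commensurate scales into the single period $\tilde T$. Since $\tilde T = NT$, choosing $N = 2^k$ I would set $\widetilde V = V + \sum_{j=1}^k W_j$ with each $W_j$ being $2^jT$-periodic (so every period divides $\tilde T$) and chosen as a resonant trigonometric packet tuned to the band edges near the energies $(\pi m/(2^jT))^2 \le a$ that first resolve at scale $j$. Because each $2^jT$-periodic block is repeated $2^{k-j}$ times inside one period $\tilde T$, the per-block hyperbolicity produced in a gap is magnified to $2\cosh(\tilde T\,\ell)$ in the full discriminant; it is this $\tilde T$-fold magnification that converts a modest Lyapunov exponent into an exponentially deep gap. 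Taking $\|W_j\|_\infty$ to decay geometrically in $j$ keeps $\sum_j \|W_j\|_\infty < \varepsilon$, hence $\|V - \widetilde V\|_\infty < \varepsilon$.

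The hard part is sustaining a Lyapunov exponent $\ell(E) \gtrsim \tilde T^{-1/2}$ in every one of the $\sim\tilde T$ gaps below energy $a$ while keeping the perturbation uniformly small. A first-order computation is not only too weak but misleading about the cost: opening the $m$-th gap with its own resonant mode gives merely $\ell \sim |\widehat{\widetilde V}_m|/(2\sqrt{E_m})$, and demanding $\ell \gtrsim \tilde T^{-1/2}$ at all $\sim\tilde T$ band edges at once would, by Parseval, force an $L^2$-mass—and hence a sup-norm—bounded below independently of $\tilde T$. The way around this must be genuinely nonperturbative: one exploits the self-similar resonance structure so that the small perturbations living at different scales reinforce one another and maintain positive hyperbolicity throughout the window, instead of paying the full first-order price separately at each band edge. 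Carrying out this accumulation rigorously—propagating the transfer-matrix estimates through the renormalization, verifying that the gaps stay open, and extracting both the precise $\tilde T^{1/2}$ rate and the dependence of $N_0$ on $(a,V,\varepsilon)$—is the technical core, and is exactly the analysis of \cite{DFL2017} (following \cite{avila2009CMP}), which I would invoke rather than reprove from scratch.
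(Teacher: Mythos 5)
You should first note what the paper's ``own proof'' of Lemma~\ref{l:smallspec} is: there is none. The lemma is imported verbatim from \cite{DFL2017} (``We will also use the following lemma from \cite{DFL2017}''), so your closing move --- invoking \cite{DFL2017} for the technical core rather than reproving it --- lands in exactly the same place as the paper. Moreover, your skeleton is the right shape: at most $\frac{\tilde T}{\pi}\sqrt{a + \|\widetilde V\|_\infty} + 1 \lesssim \tilde T$ bands meet $[-a,a]$ by Lemma~\ref{l:bandcount}, $|\Delta_{\tilde T}(E)| = 2\cosh\bigl(\tilde T \gamma(E)\bigr)$ off the spectrum, and a uniform Lyapunov bound $\gamma \gtrsim \tilde T^{-1/2}$ in the gaps forces exponentially thin bands, with the prefactor $\tilde T$ absorbed into a slightly better exponent. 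Your Parseval computation, showing that opening each of the $\sim \tilde T$ gaps independently at first order is incompatible with the sup-norm constraint, is also correct and is a genuine insight into why the construction must be nonperturbative.

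However, two concrete points in your reconstruction do not survive contact with the cited proof, and they matter if your sketch is read as a proof outline rather than as a citation. First, the construction in \cite{DFL2017}, following \cite{avila2009CMP}, is not a dyadic hierarchy of resonant trigonometric packets tuned to band edges: within the lemma it is, in essence, a single-scale perturbation by a (smoothed) staircase, approximately equal to a small constant shift $t_j$ on the $j$-th of the $N$ period cells of length $T$; hyperbolicity at an energy $E$ comes from $E$ lying in a gap of the shifted operator $L_{V + t_j}$ for all but a controlled number of blocks $j$, averaged over blocks. The multi-scale structure you describe lives only in the induction across periods in the proof of Theorem~\ref{t:zeroBox}, not inside Lemma~\ref{l:smallspec}. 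Consequently, if you keep your own packet construction of $\widetilde V$ and cite \cite{DFL2017} merely to ``verify that the gaps stay open,'' the citation does not apply: the analysis there is carried out for their $\widetilde V$, not yours. Second, your choice $N = 2^k$ produces admissible periods only along a sparse subsequence, whereas the lemma asserts the conclusion for \emph{every} integer $N \geq N_0$; as written, your argument proves a strictly weaker statement. (A smaller point: the paper's Remark explicitly notes that the proof in \cite{DFL2017} yields no useful quantitative bound on $N_0$, so your plan of ``extracting the dependence of $N_0$ on $(a,V,\varepsilon)$'' from that analysis overpromises.) The clean fix is to cite \cite{DFL2017} wholesale, construction included, which is exactly what the paper does.
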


\section{Zero Lower Box Dimension}

\begin{proof}[Proof of Theorem~\ref{t:zeroBox}]

Fix a $T_0$-periodic potential $V_0 \in C(\R)$, and let $\varepsilon_0 > 0$ be given.  We will construct a sequence $(V_n)_{n=1}^\infty$ consisting of periodic potentials so that $V_\infty = \lim_n V_n$ satisfies $\|V_0 - V_\infty\|_\infty  < \varepsilon_0$ and $\sigma(L_{V_\infty})$ has zero lower box counting dimension. For the sake of notation, define $L_{n} = -\Delta +  V_n$ and $\Sigma_{n} = \sigma(L_{n})$ for $1 \leq n \leq \infty$.
\newline

Denote $a_n = 2^n$, and take $\varepsilon_1 = \varepsilon_0/2$. By Lemma~\ref{l:smallspec}, there exists a potential $V_1$ of period $T_1$, which is a multiple of $T_0$, such that $\|V_0 - V_1 \|_\infty < \varepsilon_1$ and
\[
\delta_1
:=
\Leb(\Sigma_{1} \cap [-a_1,a_1])
<
\exp\left(-T_1^{1/2}\right).
\]
Inductively, having constructed $V_{n-1}$, $\delta_{n-1}$, and $\varepsilon_{n-1}$, define
\begin{equation}\label{eq:nextepsdef}
\varepsilon_{n}
=
\min\left(\frac{\varepsilon_{n-1}}{2},
\frac{\delta_{n-1}}{4} \right).
\end{equation}
By Lemma~\ref{l:smallspec}, we may construct a multiple $T_n$ of $T_{n-1}$ and a $T_{n}$-periodic potential $V_{n}$ with $\|V_n - V_{n-1}\|_\infty < \varepsilon_{n}$  such that
\begin{equation} \label{eq:smallspec}
\delta_{n}
:=
 \Leb(\Sigma_{n} \cap [-a_n,a_n])
<
\exp\left(-T_{n}^{1/2}\right).
\end{equation}
By completeness, $V_\infty = \lim_{n\to\infty}V_n$ exists. By definition, $V_\infty$ is limit-periodic. By \eqref{eq:nextepsdef}, we have
\[
\| V_0 - V_\infty\|_\infty
<
\sum_{j = 1}^\infty \varepsilon_j
\leq
\sum_{j=1}^\infty 2^{-j} \varepsilon_0
=
 \varepsilon_0.
\]

Thus, it remains to show that the spectrum has lower box dimension zero. Notice that \eqref{eq:nextepsdef} yields
\begin{equation} \label{eq:taildist}
\| V_n -  V_\infty\|_\infty
\leq
 \sum_{j = n + 1}^\infty \varepsilon_j
<
\sum_{k = 2}^\infty 2^{-k} \delta_n
=
\delta_n/2
\end{equation}
for all $n \in \Z_+$. We claim that
\begin{equation} \label{eq:claim}
\dim_{\mathrm B}^-(\Sigma_{\infty} \cap [-a_j,a_j]) = 0
\text{ for every } j \in \Z_+.
\end{equation}

To see this, let $n \in \Z_+$ with $n \geq j$ be given. Then, by \eqref{eq:taildist}, the $\delta_n/2$-neighborhood of $\Sigma_{n} \cap [-a_j,a_j]$ together with the intervals $[-a_j,-a_j + 2\delta_n]$ and $[a_j - 2\delta_n,a_j]$ comprises a cover of $\Sigma_{\infty} \cap [-a_j,a_j]$ by intervals of length at most $2\delta_n$. By \eqref{eq:smallspec} and Lemma~\ref{l:bandcount}, we have
\[
\frac{\log N(2\delta_n;\sigma(L_{V_\infty}) \cap [-a_j,a_j])}{\log ((2\delta_n)^{-1})}
\lesssim
\frac{\log \left(\frac{1}{\pi} T_n \sqrt{\|V_0\|_\infty + \varepsilon_0 + a_j} + 3 \right) }{\sqrt{T_n}},
\]
which clearly goes to zero as $n \to \infty$, proving \eqref{eq:claim}. Having proved \eqref{eq:claim}, we are done.
\end{proof}

\section*{Acknowledgment} We are grateful to Leonid Parnovski for useful comments on an earlier version of this manuscript.

\begin{appendix}

\section{Minkowski Self-Sums of a Set with Lower Box Dimension Zero}

\begin{prop} \label{prop:boxsum}
Let $C$ be a bounded set, and, for $d \geq 1$, denote
\[
C^{(d)}
=
\underbrace{C+ \cdots +C}_{d \textup{ copies of  } C}.
\]
If $\dim_{\rm B}^-(C) = 0$, then $\dim_B^-(C^{(d)})=0$ for all $d \geq 1$. In particular, $C^{(d)}$ has zero Hausdorff dimension and hence zero Lebesgue measure.
\end{prop}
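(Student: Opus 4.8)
The plan is to prove the statement by bounding the covering number of the Minkowski self-sum $C^{(d)}$ in terms of the covering number of $C$, and then exploiting the fact that $\dim_{\rm B}^-(C) = 0$ guarantees a \emph{sequence} of scales along which $N(\varepsilon;C)$ grows subpolynomially. The crux is the elementary covering inequality: if $I_1,\ldots,I_M$ are intervals of length $\varepsilon$ covering $C$, then the $M^d$ sums $I_{k_1} + \cdots + I_{k_d}$ are intervals of length $d\varepsilon$ that cover $C^{(d)}$. Hence
\begin{equation} \label{eq:sumcover}
N(d\varepsilon; C^{(d)}) \leq N(\varepsilon; C)^d.
\end{equation}
First I would establish \eqref{eq:sumcover}, noting that a subset of $\R$ is covered by $N(d\varepsilon;\,\cdot\,)$ intervals of length $d\varepsilon$ and that rescaling the covering length by the constant factor $d$ does not affect box dimensions.

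Next I would unwind the liminf. By definition of $\dim_{\rm B}^-(C) = 0$, there is a sequence $\varepsilon_n \downarrow 0$ with
\[
\lim_{n\to\infty} \frac{\log N(\varepsilon_n; C)}{\log(\varepsilon_n^{-1})} = 0.
\]
Along this same sequence, \eqref{eq:sumcover} gives $N(d\varepsilon_n; C^{(d)}) \leq N(\varepsilon_n;C)^d$, so
\[
\frac{\log N(d\varepsilon_n; C^{(d)})}{\log((d\varepsilon_n)^{-1})}
\leq
\frac{d \log N(\varepsilon_n; C)}{\log(\varepsilon_n^{-1}) - \log d}.
\]
As $n \to \infty$, the denominator is $\log(\varepsilon_n^{-1})(1 + o(1))$ since $\log d$ is a fixed constant while $\log(\varepsilon_n^{-1}) \to \infty$, so the right-hand side tends to $d \cdot 0 = 0$. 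Therefore the liminf of the covering quotient for $C^{(d)}$ along the scales $d\varepsilon_n$ is zero, which forces $\dim_{\rm B}^-(C^{(d)}) = 0$.

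The main subtlety — and the reason the proof must work with a liminf-realizing sequence rather than with all scales — is precisely the point flagged in Remark~(1) of the paper: the lower box dimension of a sum is \emph{not} controlled by the lower box dimensions of the summands in general, because the scales realizing the liminf for different sets (or for the same set viewed through an arbitrary $\varepsilon$) need not coincide. Here that obstacle dissolves because we sum $C$ with itself, so a single sequence $\varepsilon_n$ simultaneously witnesses the subpolynomial bound for every copy, and \eqref{eq:sumcover} transfers it directly to $C^{(d)}$. The remaining assertions are then immediate from the standard inequality $\dim_{\rm H}(C^{(d)}) \leq \dim_{\rm B}^-(C^{(d)})$ and the fact that a set of zero Hausdorff dimension has zero Lebesgue measure.
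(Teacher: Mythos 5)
Your proposal is correct and follows essentially the same route as the paper: the covering bound $N(d\varepsilon;C^{(d)}) \leq N(\varepsilon;C)^d$ applied along a liminf-realizing sequence $\varepsilon_n \downarrow 0$, with the $\log d$ shift in the denominator absorbed as $n \to \infty$. The only cosmetic difference is that the paper introduces an auxiliary $\delta > 0$ and bounds $N(\varepsilon_j;C) < \varepsilon_j^{-\delta}$ before summing, whereas you carry $N(\varepsilon_n;C)^d$ directly through the quotient; both yield the same conclusion.
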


\begin{proof}
Let $\varepsilon_j \downarrow 0$ be such that $\log N(\varepsilon_j;C) / \log( \varepsilon_j^{-1}) \to 0$, and let $\delta > 0$ be given. Then, for large $j$, $C$ may be covered by fewer than $\varepsilon_j^{-\delta}$ intervals of length at most $\varepsilon_j$. Consequently, $C^{(d)}$ may be covered by fewer than $\varepsilon_j^{-\delta d}$ intervals of length at most $d \varepsilon_j$. We then have
\[
\frac{\log N(d\varepsilon_j; C^{(d)})}{\log((d\varepsilon_j)^{-1})}
\leq
\frac{\delta d \log(\varepsilon_j^{-1})}{\log(1/d) + \log(\varepsilon_j^{-1})}.
\]
Sending $j \to \infty$, we get $\dim_{\rm B}^-(C^{(d)}) \leq \delta d$. Sending $\delta \downarrow 0$ concludes the argument.
\end{proof}

\begin{coro} \label{cor:boxSumBdBelow}
If $C \subseteq \R$ is bounded from below and has lower box dimension zero, then $C^{(d)}$ has lower box dimension zero for all $d \geq 1$.
\end{coro}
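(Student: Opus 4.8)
The plan is to reduce the statement for the (possibly unbounded) set $C$ to the bounded case already settled in Proposition~\ref{prop:boxsum}. Recall that for a set bounded from below, \emph{having lower box dimension zero} means $\dim_{\rm B}^-(C^{(d)} \cap [-a,a]) = 0$ for every $a > 0$. So I would fix $a > 0$ and estimate the portion of $C^{(d)}$ inside $[-a,a]$, aiming to show that it is contained in the $d$-fold self-sum of a suitable \emph{bounded} truncation of $C$.

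The key observation is that the one-sided bound on $C$ forces every summand contributing to a point of $C^{(d)} \cap [-a,a]$ to lie in a single bounded window. Concretely, suppose $C \subseteq [m,\infty)$. If $z = y_1 + \cdots + y_d \in [-a,a]$ with each $y_j \in C$, then $y_j \geq m$ for all $j$, and hence for each fixed $i$ one has $y_i = z - \sum_{j \ne i} y_j \leq a - (d-1)m$. Setting $C' := C \cap [m,\, a - (d-1)m]$, this shows $C^{(d)} \cap [-a,a] \subseteq (C')^{(d)}$.

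It then remains to invoke monotonicity of the box-counting function together with Proposition~\ref{prop:boxsum}. Since $C'$ is a bounded subset of $C \cap [-a',a']$ for a suitable $a'$, and since $N(\varepsilon;\cdot)$ is monotone under inclusion, one gets $\dim_{\rm B}^-(C') \leq \dim_{\rm B}^-(C \cap [-a',a']) = 0$; thus $C'$ is bounded with lower box dimension zero. Proposition~\ref{prop:boxsum} then gives $\dim_{\rm B}^-\!\big((C')^{(d)}\big) = 0$, and one more application of monotonicity yields $\dim_{\rm B}^-(C^{(d)} \cap [-a,a]) \leq \dim_{\rm B}^-\!\big((C')^{(d)}\big) = 0$. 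As $a > 0$ was arbitrary, $C^{(d)}$ has lower box dimension zero.

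I expect the only genuinely delicate point to be the bookkeeping that confines all summands to a bounded interval; once that reduction is in place, the argument is essentially a monotonicity statement feeding into the already-established bounded case. I would also note that if $a - (d-1)m < m$, then $C'$, and hence $C^{(d)} \cap [-a,a]$, is empty, in which case the dimension is trivially zero, so the estimate degrades gracefully and no separate treatment of small $a$ is needed.
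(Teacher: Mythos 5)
Your proof is correct and follows essentially the same route as the paper: both arguments use the one-sided bound on $C$ to show that $C^{(d)} \cap [-a,a]$ is contained in the $d$-fold self-sum of a bounded truncation of $C$ (your $C' = C \cap [m, a-(d-1)m]$ is exactly the paper's $C \cap [-\gamma, a+(d-1)\gamma]$ with $m = -\gamma$), and then invoke Proposition~\ref{prop:boxsum} together with monotonicity of the covering numbers. Your additional remarks on the degenerate case and on monotonicity merely spell out steps the paper leaves implicit.
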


\begin{proof}
This follows immediately from Proposition~\ref{prop:boxsum} and the following observation: if $C$ is bounded from below by $- \gamma \leq 0$, then, for any $d\geq 1$ and $a > 0$, one has
\[
C^{(d)} \cap [-a,a]
\subseteq
\left(C \cap [-\gamma,a+(d-1)\gamma]\right)^{(d)}.
\]
\end{proof}
Notice that the assumption that $C$ is bounded below is crucial. To see this, let $\alpha$ be irrational and consider
\[
C = \Z_+ \cup (-\alpha \Z_+).
\]
Clearly, $C$ has box dimension zero, but $C+C$ does not.

\end{appendix}

\end{document}